
\newcommand{\beq}{\begin{equation}}
\newcommand{\eeq}{\end{equation}}
\newcommand{\barr}{\begin{eqnarray}}
\newcommand{\earr}{\end{eqnarray}}
\documentclass{amsart}

\theoremstyle{plain}

\newtheorem{zvk}[subsection]{Zariski-van Kampen Theorem}

\newtheorem{prop}[subsection]{Proposition} 
\newtheorem{prop1}[subsubsection]{Proposition}

\theoremstyle{definition}
\newtheorem{rem}[subsection]{Remark}
\newtheorem{rem1}[subsubsection]{Remark}
 
\newtheorem{example}[subsection]{Example}

\usepackage{tikz} 
\usetikzlibrary{matrix,calc,arrows,decorations.markings}

\usepackage{amssymb}
\usepackage{graphicx}
\usepackage{hyperref}

\newcommand\enet[1]{\renewcommand\theenumi{#1}
\renewcommand\labelenumi{\theenumi}}

\begin{document}

\thanks{Both authors are partially supported by MTM2007-67908-C02-01.}
  
\title{On the topology of hypocycloids}

 \newcommand{\shortauthor}{{E. Artal and J.I. Cogolludo}}

\author[E. Artal]{Enrique Artal Bartolo}
\author[J.I. Cogolludo]{Jos\'e Ignacio Cogolludo-Agust\'{\i}n}
\address{Dpto. de Matem\'{a}ticas, Facultad de Ciencias, IUMA\\
  Universidad de Zaragoza, 50009 Zaragoza, Spain}
\email{artal@unizar.es,jicogo@unizar.es}

\keywords{hypocycloid curve, cuspidal points, fundamental group}

\begin{abstract}
Algebraic geometry has many connections with physics: string theory, enumerative geometry, and mirror symmetry, among others.
In particular, within the topological study of algebraic varieties physicists focus on aspects involving symmetry and non-commutativity. In this paper, we study a family of classical algebraic curves, the hypocycloids, which have links to
physics via the bifurcation theory. The topology of some of these curves plays an important role in string theory~\cite{cer:94} 
and also appears in Zariski's foundational work~\cite{zr:29}. We compute the fundamental groups of some of these curves and show that they 
are in fact Artin groups.
\end{abstract}
\maketitle

\section{Introduction}

Hypocycloid curves have been studied since the Renaissance (apparently D\"urer in 1525 described epitrochoids in general 
and then Roemer in 1674 and Bernoulli in 1691 focused on some particular hypocycloids, like the astroid, see~\cite{plane-curves}). 
Hypocycloids are described as the roulette traced by a point $P$ attached to a circumference $S$ of radius~$r$ rolling about 
the inside of a fixed circle $C$ of radius $R$, such that $0<\rho=\frac{r}{R}<\frac{1}{2}$ (see Figure~\ref{fig-hypo}). 
If the ratio $\rho$ is rational, an algebraic curve is obtained. The simplest (non-trivial) hypocycloid is called 
the deltoid or the \emph{Steiner curve} and has a history of its own both as a real and complex curve.

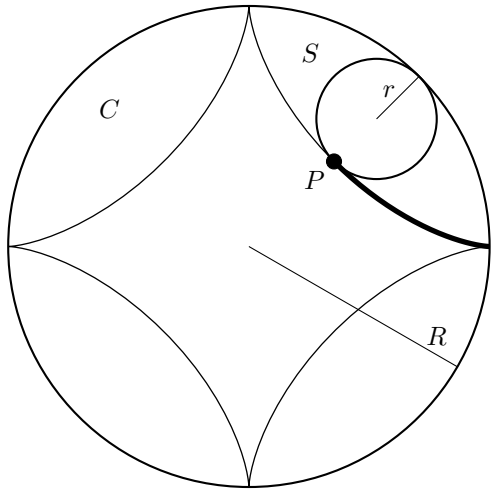
\begin{figure}[ht]
\centering
\begin{tikzpicture}[scale=.4,vertice/.style={draw,circle,fill,minimum size=0.2cm,inner sep=0}]
\draw[thick,black] (0,0) circle (8cm) node [above left=45pt] {$C$};
\draw[thick,black] (45:6) circle (2cm) node [above left=18pt] {$S$};
\node[vertice] at (45:4) {};
\node[below left] at (45:4) {$P$};
\draw (0,0)--(-30:8) node[pos=.9,above] {$R$};
\draw (45:6)--(45:8) node[pos=.3,above] {$r$};
\draw [line width=2pt] plot [parametric,domain=0:pi/4]
        ({8*cos(deg(\x))^3},{8*sin(deg(\x))^3});
\draw [line width=.5pt] plot [parametric,domain=pi/4:2*pi,samples=100]
        ({8*cos(deg(\x))^3},{8*sin(deg(\x))^3});
\end{tikzpicture}
\caption{Hypocycloid}\label{fig-hypo}
\end{figure}

Hypocycloids first appeared as trajectories of motions or integral solutions of vector fields, describing physical phenomena.
Modern physics also finds these objects useful. For instance, in the context of superstring compactifications of Calabi-Yau threefolds, certain Picard-Fuchs equations arise naturally. The monodromy group of such equations is the 
\emph{target duality group} acting on the moduli of string theory and can be computed as the fundamental group of the complement 
of the bifurcation locus in a deformation space. 

Using the celebrated Lefschetz-Zariski theorems of hyperplane sections (in the homotopy setting), these monodromy groups can 
be recovered in the context of complements of complex algebraic projective curves. 

For this reason, braid monodromies and fundamental groups of complements of plane curves have been intensively studied not only by
by mathematicians, but also by physicists in the past decades. 

Our purpose is to investigate the topology of the complement of some of those interesting hypocycloids using their 
symmetries and the structure of their affine and projective singularities in a very effective way. In order to do so, 
we need to introduce Zariski-van Kampen method~\cite{zr:29,vk:33}, braid monodromies and Chebyshev polynomials and exploit their properties.

\section{First properties of complex hypocycloids}\label{sec2}

Let us construct a parametrization of a hypocycloid as a real curve. Since $\rho$ is a positive rational number,
it admits the irreducible form $\rho:=\frac{\ell}{N}$, where $\ell$ and $N$ are coprime positive integers.
Also note that $\rho$ and $1-\rho$ define the same curve, hence $\rho\in(0,\frac{1}{2})$, i.e., $k:=N-\ell>\ell$,
will be assumed. For simplicity, the external circle $\mathcal{C}$ can be assumed to have radius $1$. 
If $C^{\mathbb R}_{k,\ell}$ denotes the real hypocycloid given by $k$ and $\ell$, it is not difficult to prove that
\beq\label{par1-hypo}
X_{k,\ell}(\theta)=\frac{\ell\cos k \theta+ k\cos \ell \theta}{N},\quad
Y_{k,\ell}(\theta)=\frac{\ell \sin k \theta- k\sin \ell \theta}{N}
\eeq
provides a parametrization of $C^{\mathbb R}_{k,\ell}$.
This parametrization is useful for drawing the hypocycloid but a rational one is preferred.
Given $n\in\mathbb{N}$ we denote by $T_n$, $U_n$, and $W_n$ the Chebyshev polynomials defined by
\beq\label{cheby}
\cos n \theta=T_n(\cos \theta),\quad
\sin(n+1)\theta=\sin \theta\ U_n(\cos \theta), \quad
\sin\!\left(\!n\!+\!\frac{1}{2}\!\right)\!\theta = \sin \frac{\theta}{2}\ W_n(\cos \theta).
\eeq
Let us recall that $T_n$, $U_n$, and $W_n$ have degree~$n$ and they have zeroes at:
\beq\label{zeroes-cheby}
\begin{array}{lll}
T_n(x)=0 & \Leftrightarrow & x=\cos \left( \frac{(2r-1)\, \pi}{2n}\right)\quad r=1,...,n,\\
U_n(x)=0 & \Leftrightarrow & x=\cos \left( \frac{r\, \pi}{n+1}\right)\quad r=1,...,n,\\
W_n(x)=0 & \Leftrightarrow & x=\cos \left( \frac{r\, \pi}{n+\frac{1}{2}}\right)\quad r=1,...,n.
\end{array}
\eeq

The following rational parametrization is obtained:
\beq\label{par2-hypo}
x_{k,\ell}(t):=\frac{P_{k,\ell}\left(\frac{1-t^2}{1+t^2}\right)}{N}, \quad y_{k,\ell}(t):=\frac{2t\ Q_{k,\ell}\left(\frac{1-t^2}{1+t^2}\right)}{N(1+t^2)},
\eeq
where
$P_{k,\ell}(x):=\ell T_k \left(x\right)+ k T_\ell \left(x\right)$, and
$Q_{k,\ell}(t):=\ell U_{k-1} \left(x\right)- k U_{\ell-1} \left(x\right)$.

If the parameter $t$ is allowed to run along the complex numbers outside $\{\pm \sqrt{-1}\}$
one obtains a complex plane curve, which will be called the complex hypocycloid, or simply hypocycloid for short, 
and denoted by $C_{k,\ell}\subset \mathbb C^2$. Note that $C^{\mathbb R}_{k,\ell}\subset C_{k,\ell} \cap \mathbb R^2$.

Moreover, let us recall that any affine complex curve in $\mathbb C^2$ defined by rational parametric equations 
$t\mapsto \left(\frac{p_1(t)}{p_3(t)},\frac{p_2(t)}{p_3(t)}\right)$
can be embedded in $\mathbb{P}^2:=\mathbb C \mathbb P^2$, the complex projective plane, by homogenizing its parametric equations and 
removing denominators. This way, the parameter space becomes $\mathbb P^1$ and the new projective parametric 
equations become $[t:s]\mapsto [s^{d-d_1}p_1\left(\frac{t}{s}\right):s^{d-d_2}p_2\left(\frac{t}{s}\right):s^{d-d_3}p_3\left(\frac{t}{s}\right))$, where $d_i:=\deg p_i(t)$, and 
$d:=\max \{d_1,d_2,d_3\}$. The complex projective hypocycloid will be denoted by $\bar C_{k,\ell}\subset \mathbb P^2$.
Note that the former parameters $\{\pm \sqrt{-1}\}$ can be interpreted as the \emph{points at infinity} of the 
complex hypocycloid.

\begin{prop}
\label{prop-hypo}
The complex projective hypocycloid $\bar C_{k,\ell}$ is a rational curve of degree $2 k$ with the following properties:
\begin{enumerate}
\enet{\rm(\roman{enumi})}
\item \label{part-symmetry}
The curve $C_{k,\ell}$ is invariant by an action of the dihedral group $\mathbb{D}_{2 N}$.
\item \label{part-aff-sing}
The 
singular points 
of $C_{k,\ell}$
are only ordinary nodes and ordinary cusps arranged as follows: 
$N$~cusps, $N(\ell-1)$ (real) nodes, and $N(k-\ell-1)$ (non-real) nodes.
\item \label{part-inf-sing}
The intersection with the line at infinity consists of two points with local equations $u^k-v^{k-\ell}=0$ tangent 
to the line at infinity (these points are singular if $k-\ell>1$). 
\item \label{part-param}
The parametrization given in~\eqref{par2-hypo} is an immersion (outside the $N$ cusps and eventually the points at infinity
if $k-\ell>1$).
\end{enumerate}
\end{prop}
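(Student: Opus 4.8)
The plan is to run every computation through the substitution $w=e^{i\theta}=\frac{1+it}{1-it}$, for which $\cos\theta=\frac{1-t^2}{1+t^2}$ and $\sin\theta=\frac{2t}{1+t^2}$. Combining \eqref{par2-hypo} with the defining identities \eqref{cheby} one gets $P_{k,\ell}(\cos\theta)=\ell\cos k\theta+k\cos\ell\theta$ and $\frac{2t}{1+t^2}Q_{k,\ell}(\cos\theta)=\ell\sin k\theta-k\sin\ell\theta$, so that the coordinate functions become Laurent monomials: setting $Z:=x+iy$ one finds $Z=\frac1N(\ell w^{k}+k w^{-\ell})$ and $x-iy=\frac1N(\ell w^{-k}+k w^{\ell})=Z(1/w)$, with $w$ running over $\mathbb{C}^{*}$ and the two values $w=0,\infty$ (i.e. $t=\pm\sqrt{-1}$) giving the points at infinity. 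Clearing denominators yields a morphism $\mathbb{P}^{1}\to\mathbb{P}^{2}$ whose three homogeneous forms have degree $2k$ and no common factor (the only candidates $s\pm it$ are excluded since $t=\pm\sqrt{-1}$ map to well-defined points of $\bar C_{k,\ell}$); as the self-intersection analysis below shows the parameter coincidences to be finite in number, the parametrization is generically injective, so a general line meets the image in $2k$ points and $\deg\bar C_{k,\ell}=2k$. Rationality is immediate.

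For~\eqref{part-symmetry} I would write the action in the $w$-coordinate. Since $k\equiv-\ell\pmod N$, the rotation $w\mapsto\zeta_N w$ with $\zeta_N=e^{2\pi i/N}$ sends $Z\mapsto\zeta_N^{-\ell}Z$; as $\gcd(\ell,N)=1$ this is a rotation of order $N$ of the $(x,y)$-plane, hence a linear automorphism of $\mathbb{C}^2$ preserving $C_{k,\ell}$, while $w\mapsto1/w$ interchanges $Z$ with $x-iy$, realizing the linear reflection $(x,y)\mapsto(x,-y)$. Together they generate a faithful linear $\mathbb{D}_{2N}$-action leaving the curve invariant. For~\eqref{part-inf-sing} I analyze $w\to0$ (the case $w\to\infty$ being symmetric): here $x-iy\sim\frac\ell N w^{-k}$ dominates, so the point is $[1:i:0]$, and in the chart $x\neq0$ with local coordinates $v_1=\frac yx-i$, $v_2=\frac1x$ a direct expansion gives $v_1\sim c_1 w^{k-\ell}$ and $v_2\sim c_2 w^{k}$ with $c_1,c_2\neq0$. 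As $\gcd(k,k-\ell)=\gcd(k,\ell)=1$ this single branch has local equation $v_1^{k}-(\text{unit})\,v_2^{k-\ell}=0$, i.e. $u^{k}-v^{k-\ell}=0$, with tangent cone $v_2^{k-\ell}=0$ the line at infinity; it is singular exactly when $k-\ell>1$.

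Parts~\eqref{part-aff-sing} (cusps) and~\eqref{part-param} both come out of differentiating: $Z'(w)=\frac{k\ell}{N}w^{-\ell-1}(w^{N}-1)$ and $(x-iy)'(w)=\frac{k\ell}{N}w^{-k-1}(w^{N}-1)$ vanish simultaneously exactly at the $N$-th roots of unity $w^{N}=1$. Away from these and from $w=0,\infty$ the parametrization is an immersion, which is~\eqref{part-param}. At a root $w_0$ one computes $Z''(w_0)=k\ell\,w_0^{k-2}\neq0$ and $(x-iy)''(w_0)=k\ell\,w_0^{\ell-2}\neq0$, so the $2$-jet pins down the tangent line; checking that the $3$-jet is independent of it (a nonvanishing Wronskian $Z''(x-iy)'''-Z'''(x-iy)''$ at $w_0$) identifies each of these $N$ points as an ordinary cusp.

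The heart of the matter, and the step I expect to be the main obstacle, is the node count in~\eqref{part-aff-sing}. Self-intersections are unordered pairs $a\neq b$ in $\mathbb{C}^{*}$ with $f(a)=f(b)$ and $f(1/a)=f(1/b)$, where $f(w)=\ell w^{k}+k w^{-\ell}$. Using $w^{\ell}f(w)=\ell w^{N}+k$ these become $\ell(ab)^{\ell}(a^{k}-b^{k})=k(a^{\ell}-b^{\ell})$ and $k(ab)^{k}(a^{\ell}-b^{\ell})=\ell(a^{k}-b^{k})$, and eliminating forces either $a=b$ or $(ab)^{N}=1$. Fixing $\zeta=ab$ with $\zeta^{N}=1$ and setting $b=\zeta/a$ reduces the problem to a single degree-$2k$ polynomial in $a$; its spurious locus $a^{2}=\zeta$ (the diagonal $a=b$, along which the cusps reappear as higher-order roots) must be removed with the correct multiplicities before dividing by $2$ for unordered pairs, and it is precisely this bookkeeping that is delicate. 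As an independent control, rationality gives $\sum_{P}\delta_{P}=\binom{2k-1}{2}$; subtracting the two points at infinity, which by the Puiseux pair $(k-\ell,k)$ contribute $(k-1)(k-\ell-1)$, and the $N$ cusps, forces exactly $N(k-2)$ nodes. Finally, since $w\mapsto\bar w$ is the complex conjugation of $C_{k,\ell}$, the conjugation-invariant nodes are the self-crossings of the real trace $C^{\mathbb{R}}_{k,\ell}$, which a direct count on the unit circle $|w|=1$ gives as $N(\ell-1)$, the remaining $N(k-\ell-1)$ occurring in complex-conjugate pairs.
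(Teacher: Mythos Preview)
Your approach is correct and genuinely different from the paper's. The key move---passing to $w=e^{i\theta}$ and writing $Z=x+iy=\frac{1}{N}(\ell w^{k}+kw^{-\ell})$---replaces the paper's Chebyshev-polynomial machinery by Laurent monomials, and this pays off repeatedly: the dihedral action becomes $w\mapsto\zeta_N w$ and $w\mapsto 1/w$ with no computation; the local type at infinity drops out of a direct Puiseux expansion in $w$, whereas the paper argues indirectly via two B\'ezout computations (with $L_\infty$ and with the circumscribed circle $\mathcal{C}$); and the cusp locus $w^{N}=1$ comes from the single factorisation $Z'(w)=\frac{k\ell}{N}w^{-\ell-1}(w^{N}-1)$, while the paper needs separate Chebyshev identities for $N$ even and $N$ odd. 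For the total node count both arguments ultimately rely on the genus formula, though the paper first exhibits $(k-2)N$ explicit nodes on the symmetry axes and then invokes genus to exclude further singularities, whereas you set up the self-intersection system, extract $(ab)^{N}=1$, and let genus do the counting. Your route is cleaner; the paper's gives extra geometric information (which nodes sit on which reflection lines) that feeds into the later fundamental-group computations.

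One point deserves more care. Your identification of the real nodes with the self-crossings of the real trace on $|w|=1$ is not automatic: a conjugation-invariant unordered pair $\{a,b\}$ could in principle satisfy $\bar a=a$, $\bar b=b$ (both parameters real, hence off the unit circle) rather than $\bar a=b$. From $(ab)^{N}=1$ with $ab\in\mathbb{R}$ this forces $ab=\pm1$; the case $b=1/a$ reduces to $\ell\sinh(ku)=k\sinh(\ell u)$ for $u=\log|a|$, which has only $u=0$ by a convexity argument, and the remaining sign cases are similar. Once that is checked, the count of unordered pairs $\{e^{i\alpha},e^{i\beta}\}$ on the unit circle with $\alpha+\beta\in\frac{2\pi}{N}\mathbb{Z}$ and equal image is straightforward and does give $N(\ell-1)$, matching the paper's branch-crossing argument along the symmetry lines $L_0$ and $L_{2N}$.
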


\begin{proof}
First, let us show that 
the rotation of angle $\frac{2\pi}{N}$ and the reflection
with respect to the horizontal axis given by the equation $\{y=0\}$ globally fix $C_{k,\ell}$. These two symmetries generate a dihedral group
of order $2N$, denoted by $\mathbb D_{2N}$.

The reflection is an immediate consequence of the fact that the first coordinate of the affine parametrization is
even $x_{k,\ell}(-t)=x_{k,\ell}(t)$, while the second coordinate is odd $y_{k,\ell}(-t)=-y_{k,\ell}(t)$, see~\eqref{par2-hypo}.

In order to visualize the rotation, it is more convenient to use trigonometric notation. One can check that
\[
\left(
\begin{matrix}
X_{k,\ell}\left(\theta+\frac{2\pi}{N}\right)\\
\\
Y_{k,\ell}\left(\theta+\frac{2\pi}{N}\right)
\end{matrix}
\right)
=
\left(
\begin{matrix}
\cos \frac{2\pi k}{N} & -\sin \frac{2\pi k}{N}\\
&\\
\sin \frac{2\pi k}{N} & \cos \frac{2\pi k}{N}
\end{matrix}
\right)
\left(
\begin{matrix}
X_{k,\ell}(\theta)\\
\\
Y_{k,\ell}(\theta)
\end{matrix}
\right),
\]
Since this is a rotation of degree $\frac{2\pi k}{N}$ and $\gcd(k,N)=1$ then
part~\ref{part-symmetry} follows.

Note that
\beq
\label{eq-param}
\varphi([t:s]:=\left[p_{k,\ell}(t,s): 2ts q_{k,\ell}(t,s): N (s^2+t^2)^k \right]
\eeq
is a parametrization of the projective hypocycloid $\bar C_{k,\ell}$, where
\[
p_{k,\ell}(t,s):=(s^2+t^2)^k P_{k,\ell}\left(\frac{s^2-t^2}{s^2+t^2}\right),
\quad
q_{k,\ell}(t,s):=(s^2+t^2)^{k-1} Q_{k,\ell}\left(\frac{s^2-t^2}{s^2+t^2}\right).
\]
Note that $p_{k,\ell}$ is homogeneous of degree~$k$ and $q_{k,\ell}$ is homogeneous of degree~$k-1$.
Since the leading coefficient of $P_{k,\ell}$ is $2^{k-1}\ell$ and the one of $Q_{k,\ell}$ is $2^{k-2}\ell$,
see \cite{chebyshev},
then $\varphi([1:\pm \sqrt{-1}])=[1:\pm \sqrt{-1}:0]$ and hence this parametrization induces 
a well-defined surjective map from $\mathbb P^1$ to $\bar C_{k,\ell}$. Hence, outside a finite number of points
(those where the parametrization is not injective) $\bar C_{k,\ell}$ is isomorphic to $\mathbb P^1$, which 
implies that $\bar C_{k,\ell}$ is a rational curve. Also, its degree corresponds with the degree of any of its parametric
equations, namely $2k$.

It is a straightforward computation that
\[
\frac{d \varphi_1}{d t}=
-\frac{4k\ell ts^2}{N(s^2+t^2)^2}\left( U_{k-1}+U_{\ell-1}\right), \quad
\text{ and }\quad\quad
\frac{d \varphi_2}{d t}=
\frac{2k\ell s}{N(s^2+t^2)}\left( T_{k}-T_{\ell}\right).
\]
One should consider two different cases:
\begin{itemize}
\item If $N$ is even, then one can use the following two formulas (see~\cite{chebyshev}):
$U_{k-1}+U_{\ell-1}=2T_{\frac{k-\ell}{2}}U_{\frac{k+\ell}{2}-1}$
and $(T_{k}(x)-T_{\ell}(x))+x(U_{k-1}(x)+U_{\ell-1}(x))=(U_k(x)+U_{\ell-2}(x))$. Therefore:
\beq
\label{eq-factor-even}
\begin{array}{l}
\dfrac{d \varphi_1}{d t}=
-\dfrac{8k\ell t s^2}{N(s^2+t^2)^2} T_{\frac{k-\ell}{2}}U_{\frac{k+\ell}{2}-1},\\
\\
2ts\dfrac{d \varphi_2}{d t}-(s^2-t^2)\dfrac{d \varphi_1}{d t}=
\dfrac{8k\ell ts}{N(s^2+t^2)}T_{\frac{k-\ell}{2}+1}U_{\frac{k+\ell}{2}-1}.
\end{array}
\eeq
Thus the common zeroes to $\varphi_1':=\frac{d \varphi_1}{d t}$ and $\varphi_2':=\frac{d \varphi_2}{d t}$ are given by
$$
t s U_{\frac{k+\ell}{2}-1}\left(\frac{s^2-t^2}{s^2+t^2}\right)=0,
$$ 
that is,
\[
\left\{ (t,s) \mid t=0, s=0, \text{ or } \frac{s^2-t^2}{s^2+t^2}=\cos \left( \frac{2r \pi}{k+\ell} \right), r=1,...,\frac{k+\ell}{2}-1\right\}.
\]
This makes a total of $2\left( \frac{k+\ell}{2}-1\right) + 2=N$ singularities. Using the previous equations one can check that 
the order of $t$ in $(\varphi_1',\varphi_2')$ is (1,2) and consequently such a singularity is an 
ordinary cusp of equation $y^2-x^3$ whose tangent is the line $L_0:=\{y=0\}$. By applying the symmetry~\ref{part-symmetry}, 
the remaining singularities are also cusps and their tangents are a rotation of $L_0$.

By the parity of $N$, the line $L_0$ intersects $\bar C_{k,\ell}$ at two cusps $\varphi([1:0])$ and $\varphi([0:1])$ with 
multiplicity 3 each, hence there are $2(k-3)$ extra points of intersection (counted with multiplicity). Let us denote by $\varphi([t_0:s_0])=P$ one such point. Due to the symmetry of $\bar C_{k,\ell}$ with respect to $L_0$, unless the tangent direction at $P$ is vertical, 
the curve $\bar C_{k,\ell}$ possesses a node at $P$. In order to prove that the $2(k-3)$ extra points of
intersection do in fact correspond to $(k-3)$ nodes, one just needs to check that $\varphi_2$ and $\varphi_1'$ do not have 
any common zeroes. In order to do so, it is enough to note that, according to~(\ref{eq-factor-even}), all the roots 
of $\varphi_1'$ that are not critical points of the parametrization are of the form 
$\frac{s^2-t^2}{s^2+t^2}=\cos \left( 2\frac{2r-1}{k-\ell}\pi\right)$, which are not roots of $\varphi_2$.
Applying the symmetry of order $N$ and the fact that the orbit of $L_0$ by such symmetry has $\frac{N}{2}$ lines,
one obtains the existence of $\frac{(k-3)N}{2}$ nodes. Let us denote this group of nodes by $A_1$.

Another group of nodes is placed on the line $L_{2N}$ which is the rotation of $L_0$ by an angle of $\frac{\pi}{N}$
radians. In order to do so, let us reparametrize the hypocycloid so that $L_{2N}$ is sent to the horizontal line.
This corresponds to switching $\ell \theta$ by $\ell \theta + \pi$ in the inner circle. The real equations are 
transformed as follows (compare with~(\ref{par1-hypo})):

\beq\label{eq-hypo-rotate}
\tilde{X}_{k,\ell}(\theta)=\frac{1}{N}\left(\ell\cos k \theta- k\cos \ell \theta\right),\quad
\tilde{Y}_{k,\ell}(\theta)=\frac{1}{N}\left(\ell \sin k \theta+ k\sin \ell \theta\right)
\eeq
which provide the rational parametrization:
\beq\label{eq-hypo-rotate2}
\tilde x_{k,\ell}(t):=\frac{\tilde P_{k,\ell}\left(\frac{1-t^2}{1+t^2}\right)}{N}, \quad \tilde y_{k,\ell}(t):=\frac{2t \tilde Q_{k,\ell}\left(\frac{1-t^2}{1+t^2}\right)}{N(1+t^2)},
\eeq
where
$\tilde P_{k,\ell}(x):=\ell T_k \left(x\right)- k T_\ell \left(x\right)$, and
$\tilde Q_{k,\ell}(x):=\ell U_{k-1} \left(x\right)+ k U_{\ell-1} \left(x\right)$.
Note that again $\tilde x_{k,\ell}(-t)=\tilde x_{k,\ell}(t)$ and $\tilde y_{k,\ell}(-t)=-\tilde y_{k,\ell}(t)$, 
and that the rotation of $\frac{2\pi}{N}$ radians is a symmetry of the curve. Using the formula
$U_{k-1}-U_{\ell-1}=2T_{\frac{k+\ell}{2}}U_{\frac{k-\ell}{2}-1}$ one can check that 
\[
\tilde \varphi_1':=\frac{\tilde \varphi_1}{d t}=-\frac{8k\ell st^2}{N(s^2+t^2)^2}T_{\frac{k+\ell}{2}}U_{\frac{k-\ell}{2}-1},
\]
and hence $\tilde \varphi_1'$ and $\tilde \varphi_2$ have common zeroes only at $t=0$ and $s=0$, which are
two vertical tangents. This shows that there are $(k-1)$ remaining nodes on $L_{2N}$ and, after applying the rotation, 
one finds $\frac{(k-1)N}{2}$ new nodes. Let us denote this group of nodes by $A_2$ and define $A=A_1\cup A_2$.

\item If $N$ is odd, then proceeding as above, one can use the formula:
$T_{k}(x)-T_{\ell}(x)=(1-x)W_{\frac{k+\ell-1}{2}}(x)W_{\frac{k-\ell-1}{2}}(x)$
and check that the critical points of the parametrization are given by $t=0$ and by 
$W_{\frac{k+\ell-1}{2}}\left(\frac{s^2-t^2}{s^2+t^2}\right)=0$, that is, 
$\{ (t,s) \mid t=0, \text{ or } \frac{s^2-t^2}{s^2+t^2}=\cos \left( \frac{2r \pi}{k+\ell} \right), r=1,...,\frac{k+\ell-1}{2}\}$.

Analogously as in the previous case, the singularity at $t=0$ is an ordinary cusp $y^2-x^3$ whose tangent is $L_0$.
There are $2k-3$ remaining points of intersection which are necessarily a vertical tangent and $k-2$ nodes, since 
$\varphi_2$ and $\frac{\varphi_1}{d t}$ do not have any common zeroes similarly as above. Again, after applying the 
rotation of order $N$ one can find $(k-2)N$ nodes. Let us denote this group of nodes by $A$.
\end{itemize}

Summarizing, we have found $\# A=(k-2)N$ nodes and $N$ ordinary cusps. Assuming~\ref{part-inf-sing} $\bar C_{k,\ell}$
contains also 2 singular points of type $u^k-v^{k-\ell}$. Using the genus formula:
\[
\frac{(2k-1)(2k-2)}{2}-(k-2)N-N-2\frac{(k-\ell-1)(k-1)}{2}-\alpha=0
\]
where the first summand is the virtual genus of a curve of degree $2k$, the second summand comes from the nodes,
the third summand comes from the cusps, the fourth one comes from the singularities at infinity, and the last one
comes from any further singularities. Since the equation has to equal zero due to the fact that $\bar C_{k,\ell}$
is rational, this forces $\alpha$ to be equal to zero, and thus $\bar C_{k,\ell}$ contains no further singularities.

To finish the proof of part~\ref{part-aff-sing} one needs to make sure that only $(\ell-1)N$ nodes are real.
If $N$ is odd this can be done by verifying that there are only $(\ell-1)$ real nodes on $L_0$. Note that the real 
nodes come from branches joining the cusps. The line $L_0$ contains one cusp and divides the set of remaining cusps 
into two groups of $\frac{N-1}{2}$ each. Since the real branches join a cusp with the $\ell$-th consecutive cusp,
there is a total of $2(\ell-1)$ branches crossing $L_0$ which lead to $(\ell-1)$ nodes. Again, applying the 
rotation one obtains the $(\ell-1)N$ real nodes. The case when $N$ is even is analogous. Part~\ref{part-aff-sing} 
will be proved if~\ref{part-inf-sing} is checked.

Let $P_1,P_2$ be the two points in $\bar{C}_{k,\ell}\cap L_\infty$, where $L_\infty$ is the line at infinity. 
Let us note that any reflection in $\mathbb{D}_{2 N}$ fixes $L_\infty$ and $\mathcal{C}$ and interchanges $P_1$ and $P_2$.
By Bezout's Theorem, $L_\infty\cdot\bar{C}_{k,\ell}=2 k$; because of the symmetry, $(L_\infty\cdot\bar{C}_{k,\ell})_{P_i}=k$.
On the other hand it is easily seen that the only intersection points of $\bar{\mathcal{C}}$ and $\bar{C}_{k,\ell}$
are the $N$ cusps and $P_1,P_2$ (compute $X(\theta)^2+Y(\theta)^2$). Since $\bar{\mathcal{C}}$ and $\bar{C}_{k,\ell}$ are transversal at the cusps, their intersection number at these points is $2$. One obtains the following:
\[
4 k=\bar{\mathcal{C}}\cdot\bar{C}_{k,\ell}=2(k+\ell)+2(\bar{\mathcal{C}}\cdot\bar{C}_{k,\ell})_{P_1}
\]
and $(\bar{\mathcal{C}}\cdot\bar{C}_{k,\ell})_{P_1}=(\bar{\mathcal{C}}\cdot\bar{C}_{k,\ell})_{P_2}=k-\ell$.

It is a standard fact in singularity theory that a locally irreducible curve germ intersecting
two smooth transversal branches with coprime multiplicities $p$ and $q$ has the same topological
type as $u^p-v^q=0$. Therefore \ref{part-inf-sing} follows. Note that if $k-\ell=1$ then the points at infinity
are smooth, otherwise they are singular: this also proves~\ref{part-param}.
\end{proof}

\section{Fundamental group of the complement of a curve}\label{sec-zvk}

Let us consider an affine complex plane algebraic curve $C\subset\mathbb{C}^2$. Let us denote
by $f(x,y)\in\mathbb{C}[x,y]$ a reduced equation of the curve $C$. For simplicity $f$ will be assumed to be 
monic as a polynomial in $y$ (this can be achieved by applying a generic change of coordinates
and dividing by a non-zero constant). Let $d$ be the degree of $f$ in $y$ (which may be smaller than
the total degree of $f$).

The classical Zariski-van Kampen method works as follows. One considers a generic vertical line $L$ in $\mathbb{C}^2$; 
the group $\pi_1(L\setminus C)$ is isomorphic to the free group $\mathbb{F}_d$, since $L\cap C$ consists of $d$ points.
A basis of loops in this group also generates $\pi_1(\mathbb{C}^2\setminus C)$ and the relations are obtained via the 
monodromy of this fibration, which is basically \emph{moving $L$ around} the non-generic vertical lines. Let us state 
it more precisely (see~\cite{acc:01a} for a more complete version).

For $t\in\mathbb{C}$, $L_t$ denotes the vertical line $x=t$. Let $NT:=\{t\in\mathbb{C}\mid\ L_t\not\pitchfork C\}$;
$NT$ is a finite set and it is the zero locus of the discriminant of $f$ with respect to $y$ (which is a polynomial in $x$).
If $t\notin NT$, then $C\cap L_t$ consists of $d$ points, and by the continuity of roots one can see $f$ as
a holomorphic mapping $\tilde{f}:\mathbb{C}\setminus NT\to V\setminus\Delta$, where:
\begin{itemize}
\item $V$ is the space of monic complex polynomials in one variable and degree $d$ (naturally isomorphic to $\mathbb{C}^d$
via the coefficients);
\item $\Delta$ is the discriminant of $V$, i.e., the set of polynomials in $V$ with multiple roots 
(which is a hypersurface of $V$).
\end{itemize}

The space $V\setminus\Delta$ can be naturally identified with the configuration space of $d$ different points in 
$\mathbb{C}$, whose fundamental group is the braid group $\mathbb{B}_d$ in $d$ strings. Let us recall the Artin 
presentation of this group:
\beq\label{artinpres}
\mathbb{B}_d=\left\langle \sigma_1,\dots,\sigma_{d-1}\Big\vert
\sigma_i\sigma_{i+1}\sigma_i=\sigma_{i+1}\sigma_i\sigma_{i+1},\ \scriptstyle{i=1,\dots,d-2},\ 
\displaystyle{[\sigma_i,\sigma_j]=1,}\ \scriptstyle{1\leq i<j-1<d-2}
\right\rangle.
\eeq
There is a free action of this group on the free group $\mathbb{F}_d$ with generators $a_1,\dots,a_d$ defined as follows:
\beq\label{braidaction}
a_i^{\sigma_j}:=
\begin{cases}
a_{i+1}&\text{ if } i=j\\
a_{i+1} a_i a_{i+1}^{-1}&\text{ if } i=j+1\\
a_i&\text{ if } i\neq j,j+1.
\end{cases}
\eeq
Note that for any $\tau\in\mathbb{B}_d$, $(a_d\cdot\ldots\cdot a_1)^\tau=a_d\cdot\ldots\cdot a_1$.

One can define the braid monodromy of $C$ with respect to the coordinates $x,y$ as follows. Let
$t_0\in\mathbb{C}\setminus NT$ and let $\mathbb{F}:=\pi_1(\mathbb{C}\setminus NT;t_0)$, which is a free group.
Then one defines $\nabla:\mathbb{F}\to\mathbb{B}_d$ as the morphism defined by $\tilde{f}$ at the level of fundamental 
groups (with a suitable identification of $\mathbb{B}_d$ with $\pi_1(V\setminus\Delta;\tilde{f}(t_0)$).

\begin{zvk}
The fundamental group of $\mathbb{C}^2\setminus C$ is the quotient of $\mathbb{F}_d$ by the subgroup
normally generated by $w^{-1} w^{\nabla(\tau)}$, $w\in\mathbb{F}_d$, $\tau\in\mathbb{B}_d$. If $b_1,\dots,b_r$ is a free 
generating system of $\mathbb{F}$, then it admits the following presentation: 
\beq\label{zvkpres}
\left\langle a_1,\dots,a_d\Big\vert\
a_i=a_i^{\nabla(b_j)},\ i=1,\dots,d-1,\ j=1,\dots,r
\right\rangle.
\eeq
\end{zvk}

\begin{rem}
A natural interpretation of $\pi_1(V\setminus\Delta;\tilde{f}(t_0))$ can be given when 
$f\in\mathbb{R}[x,y]$, $t_0\in\mathbb{R}$ and all the roots of $f(t_0,y)$ are real. 
Let $y_1>\dots>y_d$ be the roots of $f(t_0,y)$. Then $\sigma_i$
is the homotopy class of the mapping $[0,1]\to V\setminus D$,
\[
t\mapsto\{y_1,\dots,y_{i-1},c_i+r_i e^{\pi t\sqrt{-1}},c_i-r_i e^{\pi t\sqrt{-1}},y_{i+2},y_d\},
\]
where $c_i$ is the middle point between $y_i$ and $y_{i+1}$ and $r_i$ is half their distance. A similar argument
works without the \emph{real} assumptions.
\end{rem}

\begin{rem}
Let us assume again that $f\in\mathbb{R}[x,y]$, $t_0\in\mathbb{R}$ and all the roots of $f(t_0,y)$ are real. 
For the free group $\pi_1(L_{t_0}\setminus C;y_0)$, $y_0\gg y_1$, a basis can be chosen as follows. 
Fix a small radius $\varepsilon>0$.

Consider a lasso $a_i:=u_i\cdot\delta_i\cdot u_i^{-1}$ based at $y_0$ as follows. The path $u_i$ runs along the real line 
from $y_0$ to $y_i-\varepsilon$ and avoids the points $y_1,\dots,y_{i-1}$ by the upper semicircles of radius $\varepsilon$ 
centered at these points; the lasso $\delta_i$ runs counterclockwise along the circle of radius $\varepsilon$
centered at $y_i$.

\begin{figure}[ht]
  \centering
\begin{tikzpicture}[scale=2,vertice/.style={draw,circle,fill,minimum size=0.4cm,inner sep=0}]
\tikzset{flecha/.style={decoration={
  markings,
  mark=at position #1 with  {\arrow[scale=2]{>}}},postaction={decorate}}}

\coordinate (O) at (7,0);
\coordinate (A1) at (6,0);
\coordinate (A2) at (5,0);
\coordinate (Ap) at (4,0);
\coordinate (A3) at (3,0);
\coordinate (A4) at (2,0);

\draw[flecha=.55] (O)--($(A1)+.3*(1,0)$);
\draw[flecha=.5]  ($(A1)+.3*(1,0)$) arc (0:180:.3);
\draw[flecha=.55] ($(A1)-.3*(1,0)$)--($(A2)+.3*(1,0)$);
\draw[flecha=.75]  ($(A2)$) circle (0.3);

\node[vertice,fill=white] at (O) {};

\node[vertice] at (A1) {};
\node[vertice] at (A2) {};
\node[] at (Ap) {$\dots$};
\node[vertice] at (A3) {};
\node[vertice] at (A4) {};

\end{tikzpicture}
  \caption{An element of a standard geometric basis}\label{fig-base1}
\end{figure}

The \emph{ordered} basis $(a_1,\dots,a_d)$ is called a \emph{standard geometric basis}. Note that $a_i$ is a meridian
of the point $y_i$ (see~\cite{acc:01a} for a definition) and $(a_d\cdot\ldots\cdot a_1)^{-1}$ is a meridian of the point at infinity. These identifications give the geometric counterpart of the action \eqref{braidaction}. In the non-real case,
\emph{standard geometric bases} play the same role (see~\cite{acc:01a}).

\begin{figure}[ht]
  \centering
\begin{tikzpicture}[scale=.35]
\sf
\tikzset{
    partial ellipse/.style args={#1:#2:#3}{
        insert path={+ (#1:#3) arc (#1:#2:#3)}
    }
}

\coordinate (P0) at (15,-1);
\coordinate (Q1) at (19,6);
\coordinate (Q0) at (15,6);
\coordinate (P1) at (19,-1);
\coordinate (P) at (20.5,-1);
\coordinate (Q) at (20.5,6);

\definecolor{fillColor}{rgb}{0.52941,0.80784,1}
\path[draw=black,ultra thick,fill=fillColor] (22,7) -- (19,5) -- (12,5) -- (15,7)node[left=.5] {$a_1^{\sigma_1}=a_2$};
\path[draw=black,ultra thick,fill=cyan] (22,0) -- (19,-2) -- (12,-2) -- (15,0)  node[left=.5] {$a_1$};
\draw (P0)--($.8*(Q1)+.2*(P0)$) ;
\draw ($.9*(Q1)+.1*(P0)$)--(Q1) ;
\draw (P)--($(P1)+(.5,0)$);
\draw (P1) [partial ellipse=0:90:.5 and .3];
\draw (P1) [partial ellipse=130:360:.5 and .3];
\draw (P) -- (Q);
\draw[smooth] (Q) to [bend right=20]  ($(Q0)+(.5,0)$);
\draw (Q0) ellipse (.5 and .3);

\draw ($(P1)$) -- ($.4*(Q0)+.6*(P1)$) ;
\draw ($.6*(Q0)+.4*(P1)$) -- ($.8*(Q0)+.2*(P1)$) ;
\draw ($.9*(Q0)+.1*(P1)$)-- ($.94*(Q0)+.06*(P1)$) ;
\draw ($.98*(Q0)+.02*(P1)$)-- ($1*(Q0)+.00*(P1)$) ;
\begin{scope}[xshift=12cm]
\coordinate (P0) at (15,-1);
\coordinate (Q1) at (19,6);
\coordinate (Q0) at (15,6);
\coordinate (P1) at (19,-1);
\coordinate (P) at (20.5,-1);
\coordinate (Q) at (20.5,6);

\path[draw=black,ultra thick,fill=fillColor] (22,7) -- (19,5) -- (12,5) -- (15,7) node [right=2.5] {$a_2^{\sigma_1}=a_2 a_1 a_2^{-1}$};
\path[draw=black,ultra thick,fill=cyan] (22,0) -- (19,-2) -- (12,-2) -- (15,0)
node[right=2.5] {$a_2$};

\draw (P0)--($.8*(Q1)+.2*(P0)$) ;
\draw ($.9*(Q1)+.1*(P0)$)-- ($.94*(Q1)+.06*(P0)$) ;
\draw ($.97*(Q1)+.03*(P0)$)--(Q1) ;

\draw ($(P1)$) -- ($.4*(Q0)+.6*(P1)$) ;
\draw ($.6*(Q0)+.4*(P1)$) -- ($.8*(Q0)+.2*(P1)$) ;
\draw ($.92*(Q0)+.08*(P1)$)--(Q0) ;

\draw[smooth] (P) to ($(P1)+(.2,.2)$)  ; 
\draw ($(P1)+(-.5,.3)$) -- ($(P0)+(.5,0)$) ;
\draw (P0) [partial ellipse=0:50:.5 and .3];
\draw (P0) [partial ellipse=90:360:.5 and .3];

\draw (P) -- (Q);
\draw (Q) to [out=165,in=45]($(Q0)+(-1,0)$) to[out=-135,in=210] ($(Q1)-(.5,0)$)  ; 
\draw (Q1) ellipse (.5 and .3);
\end{scope}

\end{tikzpicture}%

  \caption{Geometric action of the braid group}\label{fig-genaccion}
\end{figure}

In general \emph{standard pseudogeometric bases} are preferred for the group $\mathbb{F}$ (if $NT\subset\mathbb{R}$); 
the only difference with geometric bases being that the condition on the position of the base points is weakened.
\end{rem}

\begin{example}\label{ex-rels}
Let us assume that $b_1$ corresponds to a small loop surrounding a point $t$ such that $L_t$ is an ordinary tangent line.
For suitable choices of loops and paths, $\nabla(b_1)=\sigma_1$ and the only non-trivial relation is given by $a_1=a_2$.
Analogously, for other non-transversal vertical lines $L_t$, one obtains the following braids and relations:
\begin{itemize}
\item If $L_t$ passes transversally through a node, then $\nabla(b_1)=\sigma_1^2$ and the only non-trivial relation 
is given by $[a_1,a_2]=1$.
\item If $L_t$ passes transversally through an ordinary cusp, then $\nabla(b_1)=\sigma_1^3$ and the only non-trivial 
relation is given by $a_1 a_2 a_1=a_2 a_1 a_2$.
\item If $L_t$ is tangent to an ordinary cusp, then $\nabla(b_1)=(\sigma_2\sigma_1)^2$ and the only non-trivial 
relations are given by $a_1=a_3$ and $a_2=a_3 a_2 a_1 a_2^{-1} a_1^{-1}$.
\item If $L_t$ passes transversally through an $m$-tacnode (two smooth branches with intersection number $m$), 
then $\nabla(b_1)=\sigma_1^{2m}$ and the only non-trivial relation is given by $(a_1 a_2)^m=(a_2 a_1)^m$.
\end{itemize}
\end{example}

\begin{rem}
Two remarks about the relations explained in Example~\ref{ex-rels} should be made. First of all, such relations can be 
expressed in such a simple manner when: $t_0$ is close enough to $t$, $b_1$ is a small meridian around $t$, and a suitable 
choice for generators of $\mathbb{F}_d$ is considered (essentially a standard geometric basis). In such cases, 
$\nabla(b_1)$ produces $r$ effective relations, where $r:=d-\#(C\cap L_t)$. 

In the general case, for instance when the base point is not close to $t$, the braid $\nabla(b_1)$ is written as
$\tau^{-1}\sigma\tau$, where:
\begin{itemize}
\item The open braid $\tau^{-1}$ goes from $L_{t_0}$ to a fiber $L_{t_0'}$ close to $L_t$.
\item The braid $\sigma$ is as in Example~\ref{ex-rels} (or a product of these braids involving disjoint subsets of strings).
\end{itemize}
When a (standard) geometric basis $a_1',\dots,a_d'$ in $\pi_1(L_{t_0'}\setminus C)$ is considered, on which $\sigma$ acts, 
only $r$ non-trivial relations are produced. This choice of basis allows one to consider $\tau$ as an element of 
$\mathbb{B}_d$. Using this technique, one can see that $\pi_1(\mathbb{C}^2\setminus C)$ can be described by a system of 
relations of type $a_i'=a_i^{\tau}$.

The relations will involve conjugates of the standard generators of $\mathbb{F}_d$. The number of relations and their 
\emph{type} (equality, commutation, braid relations,\dots) depend only on the braids~$\sigma$, which depend only on 
algebraic properties of $C$ (degree, topological type of singularities), but the involved conjugations of the generators 
depend on the coefficients of $f$. In general, finding the braid $\tau$ explicitly is a very difficult task and unless
the coefficients of $f$ are rational or Gaussian integers, computational methods are far from being efficient.

The braid $\tau$, or equivalently the relationship between the two geometric bases, can be obtained algorithmically
from the real picture, when $f$ has real equations (\emph{real curve}), all (or almost all) of the non-transversal 
vertical lines are real and the branches around the critical points are also real. Such curves are called 
\emph{totally real} curves.
\end{rem}

\section{Fundamental group of hypocycloids and Artin groups}\label{sec_artin}

In this section we compute fundamental groups for some hypocycloids. Hypocycloids are real curves, but not 
totally real curves. However, they are very symmetric and it is by quotienting the plane by these symmetries that one
can obtain a curve that is closer to being totally real.

In these computations a special type of groups, called \emph{Artin groups} will be obtained. Artin groups can be defined 
as follows. Let $\Gamma$ be a finite graph (with no loops and no multiple edges between vertices); let 
$S:=S(\Gamma)$ be the set of vertices and let $E:=E(\Gamma)$ be the set of edges (considered as a subset of 
$\{A\subset S\mid \#A=2\}$). The Artin group $G_\Gamma$ associated with $\Gamma$ is generated by the elements of $S$ 
and has a system of relations given as follows for any $s\neq t$:
\begin{itemize}
\item If $\{s,t\}\in E$ then $s t s=t s t$.
\item If $\{s,t\}\notin E$ then $[s,t]=1$.
\end{itemize}
For example, $\mathbb{B}_d$ is the Artin group associated with the $\mathbb{A}_{d-1}$ graph 
(a connected linear graph with $d-1$ vertices).

\subsection{The Deltoid}
\label{sec-deltoid}
\mbox{}

\begin{figure}[ht]
  \centering
\begin{tikzpicture}[vertice/.style={draw,circle,fill,minimum size=0.4cm,inner sep=0}]

\draw [line width=1pt] plot [parametric,domain=0:2*pi,samples=100]
        ({2*cos(deg(\x))*(1+cos(deg(\x))},{2*sin(deg(\x))*(1-cos(deg(\x))});
\end{tikzpicture}
  \caption{Deltoid}\label{fig_deltoid}
\end{figure}
The deltoid corresponds to $\rho=\frac{1}{3}$, that is, $k=2$, $\ell=1$, and $N=3$. 
In order to obtain an explicit equation for $C_{k,\ell}$, given by parametric equations $(x_{k,\ell}(t),y_{k,\ell}(t))$, 
one needs to compute the resultant of $x_{k,\ell}(t)-x$ and $y_{k,\ell}(t)-y$, with respect to $t$. In this case one obtains
\beq\label{eq-deltoid}
C_{2,1}: 3(x^2+y^2)^2+24 x (x^2+y^2)  +6 (x^2+y^2)-32 x^3-1=0.
\eeq
Let $\pi:\mathbb{C}^2\to\mathbb{C}^2$ be the $2$-fold ramified covering given by $\pi(x,y):=(x,y^2)$.
Let $D_{2,1}:=\pi(C_{2,1})$; since $C_{2,1}$ is symmetric with respect to the involution $\sigma:\mathbb{C}^2\to\mathbb{C}^2$,
$\sigma(x,y):=(x,-y)$ (which generates the automorphism group of $\pi$), one can check that $C_{2,1}=\pi^{-1}(D_{2,1})$, 
where $D_{2,1}$ is given by
\beq\label{eq-deltoid_z2}
D_{2,1}: 3(x^2+y)^2+24 x (x^2+y) +6 (x^2+y)-32 x^3-1=0.
\eeq
In order to compute $\pi_1(\mathbb{C}^2\setminus(D_{2,1}\cup X))$ (where $X$ is the $x$-axis), one has to compute the discriminant
of the equation~\eqref{eq-deltoid_z2} with respect to $y$. Since all its roots are real, that is, $D_{2,1}$ is a totally real
curve, Figure~\ref{fig_deltoid_z2} contains all the topological information needed to compute the group
$\pi_1(\mathbb{C}^2\setminus(D_{2,1}\cup X))$.

\begin{figure}[ht]
\centering
\begin{tikzpicture}[scale=.5]

\draw[ultra thick,red] (3,8)--(20,8) node[right] {$X$};
\draw[thick,black] (17,4)--(17,12) node[right] {$L_1$};
\draw[semithick,dashed,black] (13,4)--(13,12);
\draw[thick,black] (11,4) -- (11,12) node[right] {$L_2$};
\draw[thick,black] (7,4) -- (7,12)  node[right] {$L_3$};
\draw[thick,blue,smooth] (20,6) to [out=135,in=0] (17,8) to[out=180,in=-30] (7,11)
to[out=-30,in=135] (11,8) to[out=-45,in=180] (15,6)  node[right] {$C$};
\node[right] at (13,8.8) {$a$};
\node[right] at (13,7.5) {$x$};
\node[right] at (13,5.8) {$b$};
\end{tikzpicture}%

\caption{The curve $D_{2,1}\cup X$}\label{fig_deltoid_z2}
\end{figure}
Following~\S\ref{sec-zvk}, the dotted line $L$ represents a generic vertical line (the other three lines 
in Figure~\ref{fig_deltoid_z2} are the non-transversal vertical lines). After fixing a \emph{big enough} real number as the 
base point, one can consider the natural free basis of $\pi_1(L\setminus(D_{2,1}\cup X))$ given by $a,x,b$ 
(positive meridians around the intersections with the curve), such that $(b x a)^{-1}$ is a meridian of the point at infinity.
Moving around the line $L_1$ one obtains the braid $\sigma_1^6$, which produces the relation:
\[
(a x)^3=(x a)^3.
\]
The braid around $L_2$ is $\sigma_2^2$, and the relation is $[b,x]=1$.
In order to compute the relations provided by $L_3$ one can proceed as follows. 
Consider a vertical line $L'$ between $L_2$ and $L_3$ and generators $a',b',x'$ of $\pi_1(L'\setminus(D_{1,2}\cup X))$ 
as done with $L$. In order to see them as elements in  $\pi_1(L\setminus(D_{2,1}\cup X))$ it is necessary to connect 
the base points in the same horizontal line. It is easily seen that $a=a'$, $b=b'$ and $x=x'$. The relation obtained 
around $L_3$ is $a b a=b a b$.

One obtains
\begin{equation*}
\pi_1(\mathbb{C}^2\setminus(D_{2,1}\cup X))=
\langle a,x,b\ \Big\vert\ 
(a x)^3=(x a)^3, [b,x]=1, a b a=b a b
\rangle. 
\end{equation*}
Since $\pi_{|}:\mathbb{C}^2\setminus(C_{2,1}\cup X)\to\mathbb{C}^2\setminus(D_{2,1}\cup X)$
is a double unramified covering, one can check that the group $\pi_1(\mathbb{C}^2\setminus(C_{2,1}\cup X))$
is the subgroup of index~$2$ normally generated by $a,b,x^2$. It is well-known that
$\pi_1(\mathbb{C}^2\setminus C_{2,1})$ can be obtained from $\pi_1(\mathbb{C}^2\setminus(C_{2,1}\cup X))$ 
by adding the relation $x^2=1$.

\begin{rem1}
In fact, the above operations, computing the index~$2$ subgroup and factoring by $x^2$, commute. Therefore, one 
$\pi_1(\mathbb{C}^2\setminus C_{2,1})$ can also be considered as a subgroup of index~$2$ of $G_{2,1}:=\pi_1(\mathbb{C}^2\setminus(D_{2,1}\cup X))/\langle x^2\rangle$.
\end{rem1}

\begin{prop1}
The group $\pi_1(\mathbb{C}^2\setminus C_{2,1})$ is the Artin group of the triangle.
\end{prop1}
\begin{proof}
The group $\pi_1(\mathbb{C}^2\setminus C_{2,1})$ is the kernel of the morphism
$\rho:G_{2,1}\to\langle t\ \mid\ t^2=1\rangle$, given by $\rho(a):=\rho(b):=t$, $\rho(x):=1$.

Using the Reidemeister-Schreier method, $\pi_1(\mathbb{C}^2\setminus C_{2,1})$ is generated by
$a,b,c$ where $c:=x a x$ (note that $b=x b x$, because of the second relation).
The third relation gives $a b a=b a b$ and $c b c=b c b$. The first relation
gives $a c a=c a c$. The presentation of the Artin group for the triangle results directly.
\end{proof}

\begin{rem1}
According to Proposition~\ref{prop-hypo}\ref{part-inf-sing}, the projective closure $\bar{C}_{2,1}$ of $C_{2,1}$ 
in $\mathbb{P}^2$ 
is such that the line at infinity is a bitangent and the contact points are the imaginary cyclic points of order 4. 
The fundamental group $\pi_1(\mathbb{P}^2\setminus \bar{C}_{2,1})$ is a non-abelian finite group of size~$12$ which 
was first computed by Zariski~\cite{zr:29} and it is the braid group of $3$ strings in the $2$-sphere. This is the curve of smallest degree with a non-abelian fundamental group. Its dual 
is a nodal cubic curve (as a real curve with a node with imaginary tangent lines).
\end{rem1}

\subsection{Astroid}
\label{sec-astroid}
\mbox{}

\begin{figure}[ht]
  \centering
\begin{tikzpicture}[scale=.5,vertice/.style={draw,circle,fill,minimum size=0.4cm,inner sep=0}]

\draw [line width=1pt] plot [parametric,domain=0:2*pi,samples=100,rotate=45]
        ({8*cos(deg(\x))^3},{8*sin(deg(\x))^3});
\end{tikzpicture}
\caption{Astroid}
\label{fig_astroid}
\end{figure}

The astroid corresponds to $\rho=\frac{1}{4}$, that is, $k=3$, $\ell=1$, and $N=4$. 
In order to apply a more suitable symmetry, a rotation of the astroid should be performed to obtain a curve as
in Figure~\ref{fig_astroid}. One obtains
\beq\label{eq-astroid}
C_{3,1}: 4(x^2+y^2)^3+15(x^2+y^2)^2+12  (x^2+y^2) -108 x^2 y^2-4=0.
\eeq
As we did for the deltoid in~\S\ref{sec-deltoid}, let us consider $D_{3,1}:=\pi(C_{3,1})$ the quotient of $C_{3,1}$ 
by the symmetry $\sigma$, which has equation
\beq\label{eq-astroid_z2}
D_{3,1}: 4(x^2+y)^3+15(x^2+y)^2+12  (x^2+y) -108 x^2 y-4=0.
\eeq
In order to compute $\pi_1(\mathbb{C}^2\setminus(D_{3,1}\cup X))$ (where $X$ is the horizontal axis), one computes the 
discriminant of the equation~\eqref{eq-deltoid_z2} with respect to $y$. In this case $D_{3,1}$ is not totally real, thus
Figure~\ref{fig_astroid_z2} is not enough to compute the group $\pi_1(\mathbb{C}^2\setminus(D_{3,1}\cup X))$.
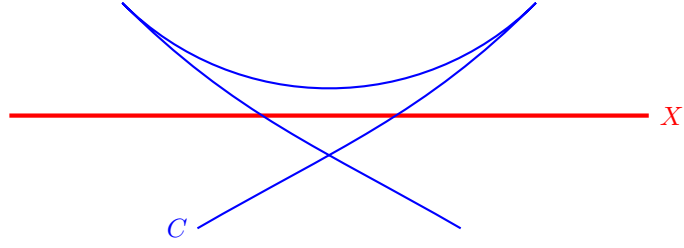
\begin{figure}[ht]
  \centering
\begin{tikzpicture}[scale=.5]

\draw[ultra thick,red] (3,9)--(20,9) node[right] {$X$};
\draw[thick,blue,smooth] (15,6) to [out=150,in=-45] (6,12) to[out=-45,in=-135] (17,12)
 to[out=-135,in=30] (8,6)  node[left] {$C$};
\end{tikzpicture}%

  \caption{The curve $D_{3,1}\cup X$}\label{fig_astroid_z2}
\end{figure}
According to Proposition~\ref{prop-hypo}\ref{part-aff-sing} the astroid has four ordinary (non-real) double points: 
two of them are sent to the \emph{real} double point shown in Figure~\ref{fig_astroid_z2}, the other two are on the 
symmetry axis and are sent to two (non-real) points where $D_{3,1}$ is tangent to $X$, hence $D_{3,1}$ is not a totally
real curve. However, $D_{3,1}$ is symmetric with respect to the $Y:=\{x=0\}$ axis, and hence one can perform the quotient
with respect to this symmetry and obtain the curve $X\cup Y\cup E_{3,1}$, where
\beq\label{eq-astroid_z2z2}
E_{3,1}: 4(x+y)^3+15(x+y)^2+12  (x+y) -108 x y-4=0.
\eeq
The curve is rotated in order to have a better projection.
\begin{figure}[ht]
  \centering
\begin{tikzpicture}[scale=.45]

\coordinate (P1) at (15,5);
\coordinate (P2) at (15,-3);
\coordinate (Q1) at (20,0);
\coordinate (Q2) at (20,2);
\draw[ultra thick,red] (P1)--(Q1);
\draw[ultra thick,red] (P2)--(Q2);
\draw[thick,blue,smooth] ($(P1)+(0,1)$) to [out=-60,in=135]  ($.5*(P1)+.5*(Q1)$)  to[out=-45,in=180] (21,1)  to[out=180,in=45] ($.5*(P2)+.5*(Q2)$)
to[out=225,in=60] ($(P2)-(0,1)$) ;
\end{tikzpicture}%

  \caption{The curve $E_{3,1}\cup X\cup Y$}\label{fig_astroid_z2z2}
\end{figure}
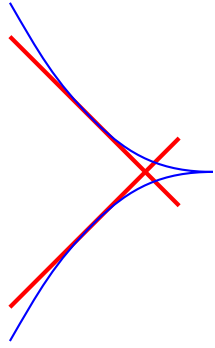

According to \S\ref{sec-zvk}, $\pi_1(\mathbb{C}^2\setminus(E_{3,1}\cup X\cup Y))$ is generated by $x,a,b,y$ 
and a system of relations can be given as follows:
$a b a=b a b$, $[a,x]=1$, $[b,y]=1$, $[x,y]=1$, $(a y)^2=(y a)^2$, and $(b x)^2=(x b)^2$.
Defining $G_{3,1}$ as before by adding the relations $x^2=1$, $y^2=1$, one needs to compute the appropriate 
index four subgroup. A straightforward application of the Reidemeister-Schreier method gives the following result.

\begin{prop1}
The group $\pi_1(\mathbb{C}^2\setminus C_{3,1})$ is the Artin group of the square.
\end{prop1}

\begin{rem1}
The curve $\bar{C}_{3,1}$ is a sextic with six cusps and four nodes, i.e., the dual of a nodal quartic.
These curves were studied by O.~Zariski~\cite{zr:36}. The group $\pi_1(\mathbb{P}^2\setminus\bar{C}_{3,1})$
is isomorphic to the braid group of four strings on the sphere.
\end{rem1}

\subsection{Hypocycloid \texorpdfstring{$\rho=\frac{2}{5}$}{rho=2/5}}
\mbox{}

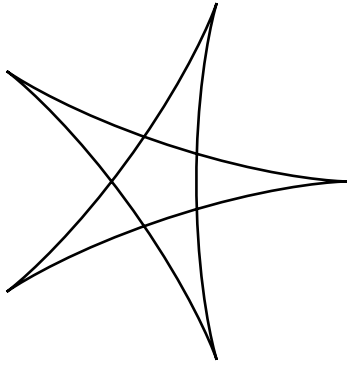
\begin{figure}[ht]
  \centering
\begin{tikzpicture}[scale=.5,vertice/.style={draw,circle,fill,minimum size=0.4cm,inner sep=0}]

\draw [line width=1pt] plot [parametric,domain=0:2*pi,samples=100]
        ({(2*cos(3*deg(\x))+3*cos(2*deg(\x))},{(2*sin(3*deg(\x))-3*sin(2*deg(\x))});
\end{tikzpicture}
  \caption{Hypocycloid $\rho=\frac{2}{5}$}\label{fig_hipo53}
\end{figure}
This is a particular case of the hypocycloids for $\rho=\frac{n}{2 n+1}$ where all nodes are real.
The curve $C_{3,2}$ has equation
\beq\label{eq-hipo53}
80(x^2+y^2)^3+165(x^2+y^2)^2-30(x^2+y^2)-216 x ({x}^{4}-10\,{x}^{2}{y}^{2}+5\,{y}^{4})+1
=0.
\eeq
The quotient $D_{3,2}$ of $C_{3,2}$ by the action of $\sigma$ is a totally real curve, and thus the real picture contains 
all the topological information needed to compute the group $\pi_1(\mathbb{C}^2\setminus(D_{2,1}\cup X))$.

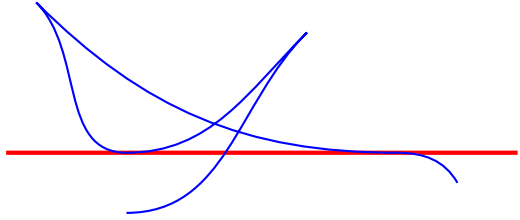
\begin{figure}[ht]
  \centering
\begin{tikzpicture}[scale=.4]

\draw[ultra thick,red] (3,9)--(20,9);
\draw[thick,blue,smooth] (18,8)to [out=120,in=0] (16,9) to [out=180,in=-45] (4,14) to [out= -45,in=180] (7,9) to[out=0,in=-135] (13,13) to[out=-135,in=0] (7,7);
 to [out=150,in=-45] (6,12) to[out=-45,in=-135] (17,12)
 to[out=-135,in=30] (8,6)  node[left] {$C$};
\end{tikzpicture}%

  \caption{The curve $D_{3,2}\cup X$}\label{fig_hipo53_z2}
\end{figure}

Following \S\ref{sec-zvk}, one needs to select a generic vertical line $L_{t_0}$, which will be chosen to 
sit between the two real double points of $D_{3,2}$. The group $G_{3,2}$, which is the quotient of
$\pi_1(\mathbb{C}^2\setminus(D_{3,2}\cup X))$ by the square of a meridian of $X$,
is generated by $a,b,x,c$ a has the following system of relations $x^2=1$, $[a,b]=1$, $(a x)^2=(x a)^2$, 
$(x a x) b (x a x)=b (x a x) b$, $[c,x]=1$, $[b,c]=1$, $a c a=c a c$, and $(b x)^3=(x b)^3$.

\begin{prop1}
The group $\pi_1(\mathbb{C}^2\setminus C_{3,2})$ is the Artin group of the pentagon.
\end{prop1}

\begin{proof}
After using Reidemeister-Schreier again, one obtains the following set of generators: $a,b,c,u:=x a x, v:=x b x$,
a set of \emph{cuspidal} relations among the adjacent list of generators $u,b,v,a,c$ in a cyclic manner, and 
commutation relations amongst the non-adjacent generators.
\end{proof}

\begin{rem1}
This result was already obtained in~\cite{cer:94,car:94}. The authors computed the correct presentation of the group, but then they stated that this group is isomorphic to 
$\mathbb{B}_5$; this is not true as can be checked either directly or using Artin group theory.
\end{rem1}

\begin{rem1}
With some more computations, it is possible to show that the group for the hypocycloid $C_{4,3}$ (for $\rho=\frac{3}{7}$)
is the Artin group of the polygon of $7$~edges. Since the quotient of the curves $C_{n+1,n}$ is \emph{totally real},
we will compute the general case in a forthcoming paper.
\end{rem1}

\subsection{Hypocycloid \texorpdfstring{$\rho=\frac{3}{8}$}{rho=3/8}}
\mbox{}

\begin{figure}[ht]
  \centering
   \begin{tikzpicture}[scale=.25,vertice/.style={draw,circle,fill,minimum size=0.4cm,inner sep=0}]

\draw [line width=1pt] plot [parametric,domain=0:2*pi,samples=100,rotate=22.5]
        ({(3*cos(5*deg(\x))+5*cos(3*deg(\x))},{(3*sin(5*deg(\x))-5*sin(3*deg(\x))});
\end{tikzpicture}
  \caption{Hypocycloid $\rho=\frac{3}{8}$}\label{fig_hipo85}
\end{figure}
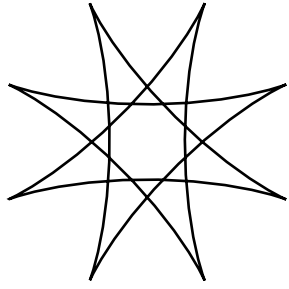
We complete this preliminary study of the topology of the hypocycloid curves
with $C_{5,3}$ corresponding to $\rho=\frac{3}{8}$, i.e., $k=5$, $\ell=3$, and $N=8$. 
As with the astroid in~\ref{sec-astroid}, a rotation will be performed in order to use more suitable symmetries. 
One obtains:
\beq\label{eq-hipo85}
\begin{split}
C_{5,3}:& 11664\, \left( {x}^{2}+{y}^{2} \right) ^{5}+47655\, \left( {x}^{2}+{y}
^{2} \right) ^{4}+40240\, \left( {x}^{2}+{y}^{2} \right) ^{3}-\\
&-17040\,
 \left( {x}^{2}+{y}^{2} \right) ^{2}+1920\,({x}^{2}+{y}^{2})+
1350000\,{x}^{2}{y}^{2} \left( x^{2} -y^2 \right) ^{2
}-64
=0. 
\end{split}
\eeq
Let $D_{5,3}:=\pi(C_{5,3})$. 
\begin{figure}[ht]
  \centering
\begin{tikzpicture}[scale=.4]

\draw[ultra thick,red] (3,9)--(20,9);

\draw[thick,blue,smooth] (12,7)to [out=135,in=-45] (7,15) to [out=-45,in=180] (16,9) to [out= 0,in=225] (20,12) to[out=225,in=0] (11.5,10) to[out=180,in=-45] (3,12) to[out=-45,in=180] (7,9)to[out=0,in=225] (16,15) to[out=225,in=45] (11,7);
 to [out=150,in=-45] (6,12) to[out=-45,in=-135] (17,12)
 to[out=-135,in=30] (8,6)  node[left] {$C$};
\end{tikzpicture}%

  \caption{The curve $D_{5,3}\cup X$}\label{fig_hipo85_z2}
\end{figure}
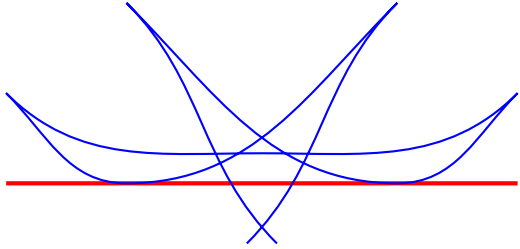

Since $D_{5,3}$ is not totally real, Figure~\ref{fig_hipo85_z2} is not enough to compute 
$\pi_1(\mathbb{C}^2\setminus(D_{5,3}\cup X))$. However, one can use the symmetry along $Y:=\{x=0\}$ to obtain 
the quotient $E_{5,3}$ of $D_{5,3}$. The curve $E_{5,3}\cup X\cup Y$ is not totally real yet. According to 
Proposition~ \ref{prop-hypo}\ref{part-inf-sing}, $C_{5,3}$ has eight imaginary nodes. Since two of them are on $X$, 
they produce another three nodes for $D_{5,3}$, one of them on $Y$ and real. After rotating the axes one obtains
Figure~\ref{fig_hipo85_z2z2}.
\begin{figure}[ht]
  \centering
\begin{tikzpicture}[scale=.5,vertice/.style={draw,circle,fill,minimum size=0.4cm,inner sep=0}]

\draw[ultra thick,red] (-3,-3)--(3,3);
\draw[ultra thick,red] (-3,3)--(3,-3);
\draw[thick,blue,smooth] (-3,-4) to [out=0,in=225]  (-2,-2) to[out=45,in=225] (5,1)
 to[out=225,in=45] (2,2)  to[out=225,in=90] (1,0)  to[out=-90,in=135] (2,-2)
to[out=-45,in=135] (5,-1) to[out=135,in=-45] (-2,2) to [out=135,in=0] (-3,4);
\node[vertice,fill=blue] at (-2,0) {};
\end{tikzpicture}%

  \caption{The curve $E_{5,3}\cup X\cup Y$}\label{fig_hipo85_z2z2}
\end{figure}
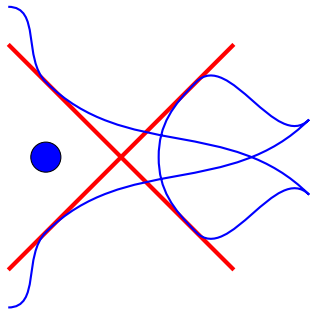
Note that $E_{5,3}$ has a (real) node with imaginary tangent lines and has a symmetry along the line joining 
the node and the origin. 
By this symmetry, $X$ and $Y$ are symmetric to one another.

\begin{figure}[ht]
  \centering
\begin{tikzpicture}[scale=1]

\draw[ultra thick,red] (-2,0)--(2,0);

\draw [line width=1pt,brown] plot [parametric,domain=-2:2,samples=100]
        ({\x},{\x*\x/2});

\draw[thick,blue,smooth] (-2,-1) to[out=60,in=180] (-1,0) to[out=0,in=180] (0,-1) to[out=0,in=225] (1,.5)
to[out=45,in=225] (2,1) to[out=225,in=0] (1,0) to[out=180,in=0] (0,.25) to[out=180,in=-45] (-1,.5) to[out=135,in=-70] (-1.8,2);
\end{tikzpicture}%

  \caption{Final curve}\label{fig_hipo85_z2z2z2}
\end{figure}
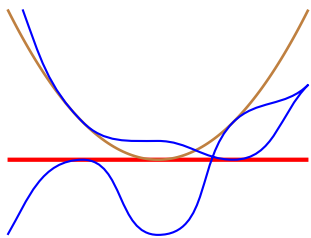
The resulting quotient is a totally real curve, whose fundamental group can be obtained from the information shown
in Figure~\ref{fig_hipo85_z2z2z2}.

\begin{prop1}
The group $\pi_1(\mathbb{C}^2\setminus C_{3,1})$ is the Artin group of the octagon.
\end{prop1}

\begin{proof}
One has to apply the Reidemeister-Schereier method and use \texttt{GAP}~\cite{GAP4} to obtain the desired presentation.
\end{proof}

\section{Conclusions}

Note that with this method one finds not only the expected groups, but also the expected presentations. This happens also in other computations of the fundamental groups of the complements of special curves, like in~\cite{besmi}.
This seems to suggest that there is a deep geometrical connection between the symmetries and the fundamental group, which should be better understood.

It seems to be possible to generalize the method of \S\ref{sec_artin} for curves $C_{n+1,n}$, since all nodes 
and vertical tangents are real. For the general case, in order to \emph{make visible} all singular points
we may consider the quotient of $\mathbb{C}^2$ by the action of $\mathbb{D}_{2n}$, which is not a smooth surface.


\providecommand{\bysame}{\leavevmode\hbox to3em{\hrulefill}\thinspace}
\providecommand{\MR}{\relax\ifhmode\unskip\space\fi MR }
\providecommand{\MRhref}[2]{%
  \href{http://www.ams.org/mathscinet-getitem?mr=#1}{#2}
}
\providecommand{\href}[2]{#2}

\end{document}